\theoremstyle{plain}
\newtheorem{theorem}{Theorem}[section]
\newtheorem{lemma}[theorem]{Lemma}
\theoremstyle{definition}
\newtheorem{definition}[theorem]{Definition}
\newtheorem{example}[theorem]{Example}
\newtheorem{remark}[theorem]{Remark}
\newcommand{\enm}[1]{\ensuremath{#1}}          %
\newcommand{\cal}[1]{\mathcal{#1}}
\newcommand{\ZZ}{\enm{\mathbb{Z}}}
\newcommand{\PP}{\enm{\mathbb{P}}}
\newcommand{\KK}{\enm{\mathbb{K}}}
\newcommand{\TT}{\enm{\mathbb{T}}}
\newcommand{\Ii}{\enm{\cal{I}}}
\newcommand{\Oo}{\enm{\cal{O}}}
\newcommand{\Ss}{\enm{\cal{S}}}
\renewcommand{\phi}{\varphi}
\renewcommand{\theta}{\vartheta}
\renewcommand{\epsilon}{\varepsilon}
\newcommand{\Union}{\bigcup}
\renewcommand{\to}[1][]{\xrightarrow{\ #1\ }}
\newcommand{\old}[1]{}
\date{}
\title[High rank loci]{Strict inclusions of high rank loci}
\author{Edoardo Ballico, Alessandra Bernardi, and Emanuele Ventura}
\address{Dipartimento di Matematica, Universit\`a di Trento, 38123 Povo (TN), Italy}
\email{edoardo.ballico@unitn.it, alessandra.bernardi@unitn.it}
\address{Dept. of Mathematics, Texas A\&M University,
College Station, TX 77843-3368, USA}
\email{eventura@math.tamu.edu, emanueleventura.sw@gmail.com}
\keywords{High tensor rank, Symmetric tensor rank, Zero-dimensional schemes, Cactus varieties, Projections of curves.}
\subjclass[2010]{(Primary) 14N05, 15A72.}
\begin{document}

\maketitle

\begin{abstract} 
For a given projective variety $X$, the high rank loci are the closures of the sets of points whose $X$-rank is higher than the generic one. 
We show examples of strict inclusion between two consecutive high rank loci. 
Our first example is for the Veronese surface of plane quartics. Although Piene had already shown an example when $X$ is a curve, we construct infinitely many curves in $\PP^4$ for which such strict inclusion appears.  
For space curves, we give two criteria to check whether the locus of  points of maximal rank 3 is finite (possibly empty). 
\end{abstract}

\section{Introduction}

Tensor rank and symmetric tensor ranks have recently attracted a lot of attention, because of their natural appearance in several pure and applied contexts (\cite{jml, Pierre, BC, Allman, Frederic}). However, the notion of rank may be generalized to any projective variety. 

We work over the complex numbers. Let $X\subset \PP^N$ be a complex projective non-degenerate variety, and let $p\in \PP^N$ be a point. The {\it $X$-rank} of $p$ 
is defined to be: 
\[
\textnormal{rk}_X(p) = \min\left \lbrace r \ | \ p\in \langle p_1,\ldots, p_r\rangle, \mbox{ where } p_i\in X \right\rbrace.
\]
In the examples of tensor and symmetric tensor ranks the underlying varieties $X$'s are the Segre varieties and the Veronese varieties respectively.

The {\it $X$-generic rank} is the least integer $g$ such that the generic element in $\langle X \rangle$ has $X$-rank equal to $g$.

In \cite{BHMT}, Buczy\' nski, Han, Mella, and Teitler made the first systematic study of loci of points whose $X$-rank is higher than the generic one. It is worth noting that, in general, even the existence of such points is not known.
Define the locus
\[
W_k = \overline{\lbrace p\in \PP^N \ | \ \textnormal{rk}_X(p) = k \rbrace},
\] 
that is, the Zariski closure of the locus of points of $X$-rank $k$. Let $g$ be the $X$-generic rank with respect to a non degenerate variety $X\subset \PP^N$ and let $m$ be the $X$-maximal rank, i.e., $m$ is the minimum integer such that {\it every} element of $\langle X\rangle$ is in the span of at most $m$ points of $X$. 
For $k\leq g$, the set $W_k$ coincides with the secant variety $\sigma_k(X):= \overline{\cup_{p_1, \ldots , p_k\in X}\langle p_1, \ldots , p_k\rangle}$. In \cite[Theorem 3.1]{BHMT}, the authors show the following inclusion: 
\[
W_k + X \subseteq W_{k-1},
\]
where $W_k+X$ denotes the join between $W_k$ and $X$, for all $g+1\leq k\leq m$. 

\medskip

If $X = \nu_3(\PP^2)$ is the Veronese surface of plane cubics, then the maximal $X$-rank is $5$ (c.f. e.g. \cite{BGI})  and it is only attained by all 
reducible cubics whose components are a smooth conic and a tangent line to it: those whose normal form is $y(x^2+yz)$ (cf. \cite{lt}).
In this case the $X$-generic rank is $4$ (cf. \cite{AH}). Moreover, one can show that $\dim W_5 = 6$ \cite[Example 4.11]{BHMT}. In this case, we have: 
\[
W_5 + X = W_4= \sigma_4(X) = \PP^9.
\]
We will sketch an idea of the proof of this fact in Example \ref{equality}.

\medskip

One instance of strict inclusion $W_k +X \subsetneq W_{k-1}$ was already known in the literature, although stated in a different fashion. In 1981, Piene proved the existence of a smooth and non-degenerate degree $4$ rational curve $X\subset \PP^3$ with $W_3$ nonempty and finite (\cite[Case $a_1$ at p. 101]{Piene}). Since $\sigma _2(X)=\PP^3$ for any non-degenerate curve $X$, one has $W_2 =\PP^3$ and $\dim (W_3+X)=2$. We elaborate more on this in Example \ref{Piene}.

\medskip

In the present paper, we provide further examples where equality of high rank loci fails. These are featured in our main results: 

\begin{theorem}\label{main}
Let $X = \nu_4(\PP^2)\subset \PP^{14}$ be the Veronese surface of plane quartics, and let $W_7$ be the maximal $X$-rank locus. Then: 
\[
W_7 + X \subsetneq W_6=\sigma_6(X) = \PP^{14}. 
\]
\end{theorem}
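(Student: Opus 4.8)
The plan is to take the two equalities as essentially known and to put all the weight on strictness. The generic rank of $X=\nu_4(\PP^2)$ equals $6$: the pair $(n,d)=(2,4)$ is one of the exceptional defective cases of Alexander--Hirschowitz \cite{AH}, so $\sigma_5(X)$ is a hypersurface while $\sigma_6(X)=\PP^{14}$; thus $g=6$ and $W_6=\sigma_6(X)=\PP^{14}$. Since the maximal rank of a ternary quartic is known to be $7$, we have $m=7$, the locus $W_7$ is the (nonempty) maximal rank locus, and the inclusion $W_7+X\subseteq W_6=\PP^{14}$ is exactly \cite[Theorem 3.1]{BHMT} in the boundary case $k=m=g+1$. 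What remains is to show that this inclusion is strict, i.e. $W_7+X\neq\PP^{14}$.

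For strictness I would argue by dimension. Since $\dim(W_7+X)\le\dim W_7+\dim X+1=\dim W_7+3$, it suffices to prove $\dim W_7\le 10$, for then $\dim(W_7+X)\le 13<14$ and the inclusion cannot be an equality. The route I propose is to confine $W_7$ inside the locus
\[
D=\{\,[f]\in\PP^{14}\ :\ \ell^2\in f^\perp\ \text{for some linear form}\ \ell\,\}
\]
of quartics admitting an apolar double line, and to bound $\dim D$. For this, consider the incidence variety $\Sigma=\{(\ell,f):\ell^2(\partial)f=0\}\subseteq\PP^2\times\PP^{14}$. For each fixed $\ell$ the map $\ell^2(\partial)\colon\mathrm{Sym}^4\to\mathrm{Sym}^2$ is surjective, so the fibre of the projection $\Sigma\to\PP^2$ is a $\PP^8$; hence $\dim\Sigma=2+8=10$, and since $D$ is the image of $\Sigma$ in $\PP^{14}$ it is closed of dimension at most $10$.

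The heart of the matter is the inclusion $W_7\subseteq D$, which I would establish as the contrapositive statement that a quartic with no apolar double line has rank at most $6$. Here I would stratify by the middle catalecticant $\mathrm{Cat}_{2,2}(f)\colon\mathrm{Sym}^2\to\mathrm{Sym}^2$, whose kernel is the space $(f^\perp)_2$ of apolar conics. If $(f^\perp)_2=0$, then $f$ sits in the open stratum where the catalecticant is invertible, with Hilbert function $(1,3,6,3,1)$, and such forms attain the generic rank $6$. If $(f^\perp)_2\neq 0$ but no apolar conic is a double line, then---since the double lines form the Veronese surface in $\PP(\mathrm{Sym}^2)=\PP^5$, which contains no projective line---the system $(f^\perp)_2$ must contain a reduced conic, and one uses the geometry of this conic together with the structure of $f^\perp$ to produce a reduced apolar scheme of at most $6$ points. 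In either case $\textnormal{rk}_X(f)\le 6$, so every rank-$7$ quartic lies in $D$, whence $W_7\subseteq D$ and $\dim W_7\le 10$.

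I expect this last step---bounding the rank from the presence of a reduced apolar conic, and thereby isolating the double line as the only source of rank $7$---to be the main obstacle; it is precisely the place where the classification of rank-$7$ ternary quartics and the zero-dimensional scheme / cactus-variety techniques enter. Granting it, one concludes $\dim(W_7+X)\le\dim W_7+\dim X+1\le 10+2+1=13<14=\dim\PP^{14}$, and therefore $W_7+X\subsetneq W_6=\sigma_6(X)=\PP^{14}$, as asserted.
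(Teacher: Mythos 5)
Your outer frame matches the paper exactly: both reduce the theorem to the bound $\dim W_7\le 10$, since then $\dim(W_7+X)\le \dim W_7+\dim X+1\le 13<14$, and your computation that the locus $D$ of quartics with an apolar double line has dimension at most $10$ (via the incidence variety over $\PP^2$ with $\PP^8$-fibres) is correct. The problem is that everything else is deferred to the inclusion $W_7\subseteq D$, which you state but do not prove --- and this inclusion is where the entire content of the theorem lives. The paper does not prove any such inclusion; instead it quotes Kleppe's classification (as refined in the BGI paper) of all rank-$7$ ternary quartics: they have cactus rank $3$, $4$ or $5$, evinced by one of six explicit types of zero-dimensional schemes (a $3$-jet on a smooth conic; two $2$-jets; a $2$-jet plus two points; a $3$-jet plus a $2$-jet; and two configurations inside a double line), and then bounds $\dim W_7$ by a parameter count of pairs $(Z,f)$ with $f\in\langle\nu_4(Z)\rangle$, each type giving at most $10$ parameters. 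Your route cannot avoid this classification: it is the only known source of control on $W_7$, so ``granting'' the inclusion amounts to granting the theorem.

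Moreover, the sketch you give for $W_7\subseteq D$ has a concrete flaw beyond incompleteness. First, your case $(f^\perp)_2=0$ (``invertible catalecticant implies rank $6$'') is exactly the nontrivial statement, due to Kleppe, that no quartic of border rank $6$ has rank $7$; it is not a formal consequence of the Hilbert function. Second, and more seriously, your plan for the case $(f^\perp)_2\ne 0$ --- ``a reduced apolar conic produces a reduced apolar scheme of at most $6$ points, hence rank at most $6$'' --- is false as an implication. The rank-$7$ quartics of cactus rank $3$ are precisely the forms $f=\ell^2 q$ with $q$ a smooth conic and $\ell$ a tangent line of $q$; such an $f$ lies in $\langle \nu_4(Z)\rangle$ for $Z$ a degree-$3$ jet sitting on a smooth conic $C$, so $C$ itself is a \emph{smooth} (in particular reduced) apolar conic of $f$, yet $\mathrm{rk}_X(f)=7$. (These $f$ do lie in $D$, but only because the square of the tangent line of the jet also lies in $I_Z\subseteq f^\perp$ --- a fact one must verify, not a consequence of having a reduced apolar conic.) So any correct proof of $W_7\subseteq D$ must use the \emph{absence} of apolar double lines in an essential way, and the only visible way to do that is to run through the Kleppe--BGI list and check, type by type, that an apolar double line is present: automatic for the two types supported in a double line, true for a $3$-jet on a conic (square of the jet's tangent line) and for two $2$-jets (square of the line joining the supports), but requiring a genuine argument for the mixed type $Z(3,p)+Z(2,\ell)$, where the unique conic through $Z$ need not be a double line. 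In short: the reduction and the bound on $\dim D$ are fine, but the key claim is unproven, its truth is not obvious, and the proposed mechanism for proving it breaks on known rank-$7$ examples.
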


\begin{theorem}\label{ii1}
Fix integers $d, g$ such that $g\ge 0$ and $d\ge 3$. Let $X\subset \PP^3$ be a smooth and non-degenerate curve of degree $d$ and arithmetic genus $g$. Assume either $23g < d^2-3d -15$, or, when $d$ is even, $g < (3d^2-16d+16)/16$.
Then $W_3$ is infinite if and only if  $g=0$ and $X$ is a smooth rational curve projectively equivalent to the curve parametrized by $x_0 = z_0^d$, $x_1= z_1z_0^{d-1}$, $x_2 = a_2z_1^{d-1}z_0$, $x_3= a_3z_1^d$ with $a_2a_3\ne 0$. Therefore, when $W_3$ is finite and nonempty,
\[
W_3 + X \subsetneq W_2 = \sigma_2(X)=\PP^3.
\] 
\end{theorem}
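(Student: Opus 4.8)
The plan is to analyze when $W_3$, the locus of rank-$3$ points for a smooth non-degenerate space curve $X \subset \PP^3$ of degree $d$ and genus $g$, fails to be finite, and to reduce the infinitude of $W_3$ to a very rigid geometric configuration. The first step is to connect the $X$-rank of a point $p$ to the geometry of projection: a general point of $\PP^3$ has rank $2$ (since $\sigma_2(X) = \PP^3$), so $p$ has rank $\ge 3$ precisely when $p$ lies on no secant line of $X$, i.e. $p$ is not on the secant variety in the honest set-theoretic sense. Thus I would first recast the condition $\op{rk}_X(p) = 3$ via the projection $\pi_p \colon X \to \PP^2$ away from $p$: the point $p$ has rank exactly $3$ when the image curve $\pi_p(X)$ admits a trisecant-type structure allowing $p$ to be written on a plane spanned by three points of $X$, and $p$ avoids all secant lines. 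The key object to control is the \emph{trisecant variety} and, more precisely, the locus of points lying on infinitely many secant or special multisecant lines.

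The second and central step is to show that $W_3$ being infinite (hence, being positive-dimensional, since it is closed) forces a one-parameter family of secant or trisecant behavior that is only possible for the stated monomial rational curve. Here I would invoke the classical theory of projections of space curves and Piene's analysis: for a general projection the image in $\PP^2$ has only nodes as singularities, and the number and nature of the apparent singularities are governed by degree and genus through formulas for the class, the number of nodes, and the stationary (flex/cusp) data. The numerical hypotheses $23g < d^2 - 3d - 15$ or (for even $d$) $g < (3d^2 - 16d + 16)/16$ are exactly the inequalities one needs so that, by a dimension or genus count, a positive-dimensional $W_3$ cannot arise generically; it can only arise from a degenerate curve whose projections are maximally special. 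I expect the argument to proceed by assuming $\dim W_3 \ge 1$ and deriving, through these numerical bounds applied to the singularity/multisecant formulas, that $g = 0$ and that $X$ has an extremal multisecant configuration. The main obstacle will be pinning down that the only smooth rational curve realizing this extremal configuration is, up to projective equivalence, the monomial curve $x_0 = z_0^d$, $x_1 = z_1 z_0^{d-1}$, $x_2 = a_2 z_1^{d-1} z_0$, $x_3 = a_3 z_1^d$ with $a_2 a_3 \ne 0$; this requires a careful normal-form analysis showing that the osculating behavior at the two points $z_0 = 0$ and $z_1 = 0$ is forced, and that the intermediate monomials must vanish.

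The third step is the converse: for the specified monomial curve I would exhibit directly that $W_3$ is infinite, by producing an explicit positive-dimensional family of rank-$3$ points. The structure of the monomials $z_0^d, z_1 z_0^{d-1}, z_1^{d-1} z_0, z_1^d$ is tailored so that the two osculating planes at the coordinate points $[1:0]$ and $[0:1]$ meet along a line, and points on (a suitable piece of) that line have rank exactly $3$; I would verify that these points lie off $\sigma_2(X)$ set-theoretically yet lie in a plane spanned by three curve points, giving an infinite family. This is a concrete coordinate computation using the vanishing pattern of the four monomials and is the routine direction.

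Finally, the displayed consequence follows formally: once $W_3$ is finite and nonempty, we have a nonempty maximal-rank locus, so $W_3 \ne \emptyset$ while $\dim(W_3 + X) \le 2 < 3 = \dim \PP^3 = \dim W_2$, giving the strict inclusion $W_3 + X \subsetneq W_2 = \sigma_2(X) = \PP^3$, where the equality $W_2 = \sigma_2(X) = \PP^3$ holds for every non-degenerate curve. The genus hypotheses enter only to guarantee that the excluded curves do not secretly have infinite $W_3$, so that outside the explicit monomial family the locus is automatically finite and the strict inclusion is available.
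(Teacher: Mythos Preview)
Your geometric picture is off in a way that makes the argument unworkable. A point $q$ with $\textnormal{rk}_X(q)\ge 3$ lies on \emph{no} honest secant line of $X$, so the projection $\pi_q\colon X\to\PP^2$ is \emph{injective}; the image $D_q$ is a degree-$d$ plane curve with only \emph{unibranch} (cuspidal) singularities, never nodes. Trisecants and multisecant lines are irrelevant here: rank-$3$ points lie on the tangential variety $\tau(X)$, and $\Sing(D_q)=\pi_q(\{p\in X: q\in T_pX\})$. The numerical hypotheses are not ``dimension or genus counts'' either: they are upper bounds on the number of cusps of a plane curve. When all cusps of $D_q$ are ordinary, their number is $(d-1)(d-2)/2-g$, and this violates Tono's bound $\kappa\le (21g+17)/2$ (equivalently $23g\ge d^2-3d-15$) or, for $d$ even, Hirzebruch's bound $\kappa\le d(5d-6)/16$. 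That is where the inequalities in the statement come from.

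The step you are missing entirely is the case split that isolates the monomial curve. Pick an irreducible curve $\Gamma\subseteq W_3$ and, for general $q\in\Gamma$, a point $p\in X$ with $q\in T_pX$. Either (I) $\Gamma=T_pX$, or (II) $\Gamma\ne T_pX$. In case (I) one projects from the \emph{line} $T_pX$ to $\PP^1$; a ramification argument in characteristic zero forces the resulting map $X\to\PP^1$ to have degree $1$, hence $g=0$ and $\deg(T_pX\cap X)=d-1$, which after normalizing coordinates is exactly the stated monomial curve. In case (II) the sets $\{p:q\in T_pX\}$ sweep out an open subset of $X$ as $q$ varies in $\Gamma$, so for general $q$ every $p$ with $q\in T_pX$ has ordinary osculating behavior $(l_0,l_1,l_2)=(0,0,0)$; thus all cusps of $D_q$ are ordinary and the cusp bounds above give the contradiction. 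Your converse is also not quite right: the infinite $W_3$ for the monomial curve is the tangent line $\{x_2=x_3=0\}$ at $p=[1{:}0{:}0{:}0]$, which has contact $d-1$ with $X$ at $p$; any $q$ on that line (other than $p$) has rank $3$ because a secant through $q$ would force a plane meeting $X$ in degree $>d$. The intersection of osculating planes plays no role.
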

See Example \ref{ii0} for a discussion of the curves appearing in Theorem \ref{ii1}.

\begin{theorem}\label{curvep4}
For every $d\geq 5$, there exists a degree $d$ rational curve $X\subset \PP^4$ such that $\dim W_4\leq 1$. Thus 
\[
W_4 + X \subsetneq W_3 =\sigma_3(X) = \PP^4.
\]
\end{theorem}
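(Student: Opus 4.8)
The plan is to realize $X$ as a linear projection of the rational normal curve, transferring the $X$-rank to the Waring rank of binary forms, for which the high-rank locus is completely understood. Let $C_d\subset\PP^d$ be the rational normal curve and choose a center $L\cong\PP^{d-5}$ disjoint from the secant variety $\sigma_2(C_d)$ (possible, since $\dim\sigma_2(C_d)=3$ and a general $\PP^{d-5}$ avoids it). The projection $\pi_L\colon\PP^d\dashrightarrow\PP^4$ then embeds $C_d$ as a smooth non-degenerate rational curve $X=\pi_L(C_d)$ of degree $d$; being non-degenerate in $\PP^4$, it satisfies $\sigma_2(X)\subsetneq\PP^4$ and $\sigma_3(X)=\PP^4$, so the generic rank is $3$ and $W_3=\PP^4$.

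The reduction I would use is the standard behaviour of rank under projection: for $q\in\PP^4$, with $M_q=\overline{\pi_L^{-1}(q)}$ the $(d-4)$-plane spanned by $L$ and a lift of $q$, one has $\textnormal{rk}_X(q)=\min\{\textnormal{rk}_{C_d}(\mu):\mu\in M_q\setminus L\}$. Writing $R_{\le 3}\subset\PP^d$ for the locus of binary $d$-forms of Waring rank at most $3$ (dense in $\sigma_3(C_d)$), this gives $\textnormal{rk}_X(q)\ge 4$ if and only if $(M_q\setminus L)\cap R_{\le 3}=\emptyset$. Hence the high rank locus is $W_{\ge 4}=\{q:M_q\cap R_{\le 3}\subseteq L\}$. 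Setting $B=\sigma_3(C_d)\setminus R_{\le 3}$, the boundary locus of forms of border rank at most $3$ and rank at least $4$, the condition becomes that the residual curve $(M_q\cap\sigma_3(C_d))\setminus L$ lies entirely in $B$. By the classical rank theory for binary forms, $B$ is the union of the tangential variety $\tau(C_d)$ (rank $d$) and the forms whose cubic apolar generator is non-reduced (rank $d-1$); its largest component has dimension $4$.

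Granting the key estimate $\dim W_4\le 1$, the proof concludes quickly: the join obeys $\dim(W_4+X)\le\dim W_4+\dim X+1\le 1+1+1=3<4=\dim W_3$, while $W_4+X\subseteq W_3$ holds by the inclusion of \cite{BHMT} (valid for $k=4$ since the generic rank is $3$). Together these give the asserted strict inclusion $W_4+X\subsetneq W_3=\sigma_3(X)=\PP^4$.

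The heart of the matter, and the step I expect to be the main obstacle, is the bound $\dim W_4\le 1$. Since a generic $q$ gives an irreducible curve $M_q\cap\sigma_3(C_d)$ meeting the dense stratum $R_{\le 3}$ outside $L$, the membership $q\in W_{\ge 4}$ forces this entire curve into the proper boundary $\overline{B}$, i.e.\ $\dim\!\big(M_q\cap\overline{B}\big)\ge 1$. Applying upper semicontinuity of fibre dimension to $\pi_L|_{\overline B}\colon\overline B\dashrightarrow\PP^4$ only yields the weak bound $\dim W_{\ge 4}\le\dim\overline B-1=3$. To improve this to $\le 1$ one cannot use a generic center: I would instead choose $L$ tuned to the geometry of $B$ --- for instance so that $L$ meets $\sigma_3(C_d)$ only along the low-dimensional tangential variety $\tau(C_d)$, forcing the curves $M_q\cap\sigma_3(C_d)$ that degenerate into $B$ to concentrate on a single tangent line of $X$ --- and then carry out a transversality analysis of how the ruling of $\sigma_3(C_d)$ by the planes $M_q$ meets the boundary. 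Verifying that such an $L$ exists for every $d\ge 5$, keeps $X$ smooth of degree $d$ with generic rank $3$, and indeed cuts $\dim W_4$ down to at most $1$ (possibly with $W_4$ empty or a single curve), is the technical core; an explicit model curve in the spirit of Theorem \ref{ii1} could be used in place of the abstract projection to make the estimate computational.
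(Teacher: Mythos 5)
Your reduction is sound as far as it goes: the projection formula $\textnormal{rk}_X(q)=\min\{\textnormal{rk}_{C_d}(\mu):\mu\in M_q\setminus L\}$ is correct, the reformulation of the high rank locus in terms of the fibre planes $M_q$ and the boundary $B=\sigma_3(C_d)\setminus R_{\le 3}$ is correct, and so is the closing deduction that $\dim W_4\le 1$ implies $W_4+X\subsetneq W_3=\PP^4$. But everything you actually establish stops at the bound $\dim W_{\ge 4}\le \dim \overline{B}-1=3$, which you yourself flag as too weak. The estimate $\dim W_4\le 1$ --- which \emph{is} the theorem --- is deferred to an unspecified ``transversality analysis'' for a ``tuned'' centre $L$ whose existence is never established, for any $d$. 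As it stands, the proposal is a (correct) reformulation of the problem, not a proof: the missing step is not a routine verification but the entire content of the statement.

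Moreover, the one concrete tuning you suggest is geometrically impossible. Smoothness of $X=\pi_L(C_d)$ forces $L\cap\sigma_2(C_d)=\emptyset$, as you use at the outset; on the other hand $\dim L+\dim\sigma_3(C_d)=(d-5)+5=d$, so by the projective dimension theorem \emph{every} centre $L\cong\PP^{d-5}$ meets $\sigma_3(C_d)$. Since $\tau(C_d)\subset\sigma_2(C_d)$, the nonempty set $L\cap\sigma_3(C_d)$ can never be contained in $\tau(C_d)$; no admissible centre ``meets $\sigma_3(C_d)$ only along the tangential variety.'' The paper avoids this boundary-of-$\sigma_3$ analysis altogether: it constructs $X$ inside the cubic scroll $\mathbb F_1\subset\PP^4$ as a general member of $|\Ii_E(C_0+(d-1)f)|$, where $E$ is a degree $d-2$ scheme on the line $C_0$ supported at a single point $o$. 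Lemma \ref{claim2} then shows directly that $\textnormal{rk}_X(q)=4$ for all $q\in C_0\setminus\{o\}$, so $W_4$ contains a line; and the bound $\dim W_4<2$ is obtained by projecting from a general point $a\in X$ into $\PP^3$, where $X'=\overline{\pi_a(X\setminus\{a\})}$ lies on a smooth quadric, and showing (Lemmas \ref{o1} and \ref{x1}) that a general point of a two-dimensional component of $W_4$ would admit a second rank-two decomposition of its image avoiding $a'$, which lifts to a rank-three decomposition upstream --- a contradiction. If you want to salvage your projection framework, you would have to exhibit an explicit admissible $L$ (the paper's curves, being rational and non-degenerate, are projections from such special centres) and then verify your containment condition $M_q\cap\sigma_3(C_d)\subseteq\overline{B}\cup L$ holds only for a one-dimensional family of $q$; that verification is exactly the absent core.
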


\noindent {\bf Structure of the paper.} In \S 2, we briefly discuss some preliminary examples: when $X=\nu_3(\PP^2)$ and Piene's curve. In \S 3, we prove Theorem \ref{main}. In \S 4, we discuss space curves and prove Theorem \ref{ii1}. In \S 4, we construct rational curves of degree $d\geq 5$ in $\PP^4$ with $\dim W_4\leq 1$, thus proving Theorem \ref{curvep4}. \\
\vspace{5mm}

\begin{small}
\noindent {\bf Acknowledgements.} We thank J. Buczy\'{n}ski, K. Han, M. Mella, and Z. Teitler for useful discussions. 
E. Ballico and A. Bernardi were partially supported by MIUR and GNSAGA of INdAM (Italy). 
E. Ventura would like to thank the Department of Mathematics of Universit\`{a} di Trento, where part of this project was conducted, for the warm hospitality. 
E. Ventura would like to thank all the participants of the secant varieties working group held at IMPAN, especially J. Buczy\'{n}ski, for 
an inspiring atmosphere, and acknowledges IMPAN for financial support. 
\end{small}

\section{Some preliminary examples}

Let $\nu_d: \PP^n\rightarrow \PP^N$, where $N=\binom{n+d}{n}-1$, be the Veronese embedding of a projective space $\PP^n$. 
Denote a zero-dimensional degree $k$ scheme supported at a point $p\in \PP^n$  with $Z(k,p)$. For a scheme $Z$, we denote with $\langle Z\rangle$ its projective span.

\begin{definition}[\cite{BR, RS}]\label{cactus:def} Let $f$ be a homogeneous polynomial of degree $d$ in $n+1$ variables. The \emph{cactus rank} of $f$ is the minimal degree of a zero-dimensional scheme $Z\subset \mathbb{P}^n$ such that $f\in \langle \nu_d(Z)\rangle $. We say that such a $Z$ {\it evinces} the cactus rank of $f$. 
\end{definition}

\begin{example}[$X=\nu_3(\PP^2)$]\label{equality}

We sketch a possible approach to see $W_5 + X = W_4= \sigma_4(X) = \PP^9$. First, recall that every scheme $Z$ evincing the cactus rank of an element of $W_5$ is a scheme $Z(3,p)$ supported on a smooth conic. Therefore, in order to show the equality it is enough to prove that a general cubic is in the span of a $Z(3,p)$ supported on a smooth conic and a simple point $Z(1,q)$. 

Let $f\in \PP(\textnormal{H}^0(\Oo_{\PP^2}(3)))$ be a general cubic. Its symmetric rank is $4$. The variety parametrizing the schemes $Z$ of degree $4$ such that $f\in \langle Z\rangle$ is the so called {\it variety of sums of powers}, $\textnormal{VSP}(f,4)$ (see \cite{RS0} for more on these classical varieties). For a general cubic $f$, it is a classical result that $\textnormal{VSP}(f,4)\cong \PP^2$. 

For a given form $f\in  \PP(\textnormal{H}^0(\Oo_{\PP^n}(d)))$, its {\it apolar ideal} $f^{\perp}$ is the homogeneous ideal in the ring of polynomial differential operators $T = \mathbb C[\partial_0,\ldots, \partial_n]$ generated by all $g\in T$ such that $g\circ f = 0$, where $\circ$ denotes the usual differentiation. For a general cubic $f$, its apolar ideal $f^{\perp}$ is generated in degree $2$; the degree $2$ homogeneous part $\left(f^{\perp}\right)_2$ is a net of conics with empty base locus. 

The  $\textnormal{VSP}(f,4)$ can be realized as the image of the regular map $\phi_f: \PP^2\rightarrow \PP^2$ defined by the net of conics
$\left(f^{\perp}\right)_2$.  The morphism $\phi_f$ is a generically $4:1$ map and the closure of its image is $\textnormal{VSP}(f,4)$. 

The branch locus $\mathcal B_f$ of $\phi_f$ is a sextic curve, that is the dual curve of the Hessian $H(f)$ of the cubic $f$ (cf. \cite[\S 3]{mmsv}). Following the classical De Paolis algorithm (cf. \cite{DeP}), fix the tangent line $q$ to one of the nine flexes of $H(f)$. Then $\deg(q\cap H(f))=3$ and the intersection is supported at one point $p$. Under duality, the line $q$ corresponds to a cusp $c_q\in \mathcal B_f$. Its fiber $\phi_f^{-1}(c_q)$ is the scheme $Z = Z(3,p)+Z(1,q)$. By construction, $Z$ is the intersection of two conics from the vector space $\langle \left(f^{\perp}\right)_2\rangle$ and so spans $f$. 
\end{example}

\begin{example}[{\bf Piene's curve \cite{Piene}}]\label{Piene}
Consider a general projection of the rational normal curve  $\nu_4(\PP^1)\subset \PP^4$ onto a rational curve $X\subset \PP^3$. Recall that the maximal $X$-rank is $3$. By \cite[Case $a_1$, p. 101]{Piene}, the only cuspidal planar projection of $X$ is obtained by projecting $X$ from the {\it unique} point $p\in \PP^3$ of intersection of an ordinary tangent with a stall tangent. For the reader who is not familiar with this notation, a stall tangent  is the tangent line at {\it stall} point of $X$, i.e., a point whose local parametrization is: 
\begin{align}\label{eqa1}
& x = at+\cdots, \notag \\
& y = b_2t^{2} + b_3t^3+\cdots, \\
& z = c_4t^{4} + c_5t^5 + \cdots\notag,
\end{align}
with $ab_2c_4\neq 0$ and $b_2c_5\neq b_3c_4$ (see \cite[Lemma 1, p. 98]{Piene}). The projected curve is a rational planar quartic curve with an ordinary cusp (arising from the ordinary tangent) and a ramphoid cusp of the $1$st type \cite[Remark, p. 98]{Piene} (arising from the stall tangent). Since a projection from a point $p\in \PP^3$ is injective if and only if $\textnormal{rk}_X(p)>2$, we have that $W_3 = \lbrace p\rbrace$. Therefore $W_3 + X \subsetneq \sigma_2(X) = \PP^3$.\end{example}

\section{Proof of Theorem \ref{main}}

In this section, we prove Theorem \ref{main}. Here  $X= \nu_4(\mathbb{P}^2)$ is the Veronese surface of plane quartics. Recall that the {\it $k$-cactus variety of $X$} is the closure of the union of the scheme-theoretic linear spans of all zero-dimensional subschemes of $X$ of degree at most $k$ (cf. \cite{BucBuc,BR,BJMR}).

Since $W_6=\sigma_6(X) = \mathbb{P}^{14}$, in order to prove the inclusion between $W_7+X$ and $W_6$ it is sufficient to show that $W_7+X$ does not fill the ambient space. Since the dimension of any join is at most the sum of the dimensions of the varieties involved plus one, to prove that  $W_7+X \neq \mathbb{P}^{14}$, it is enough to show that  $\dim W_7 \leq 10$.

Notice that plane quartics of border rank 5 are expected to fill the ambient space; however it is a classical result that they fail to do that, whereas $W_6=\mathbb{P}^{14}$ (cf. \cite{AH}). Moreover, by \cite{BR}, also the $6$-th cactus variety of $\nu_4(\mathbb{P}^2)$ fills the ambient space. Now, Kleppe's classification \cite[Chapter 3]{Kleppe} of quartics in $W_7$ shows that there is no $f\in W_7$ of symmetric border rank 6. All the normal forms of \cite{Kleppe} with symmetric rank 7 were also classified in \cite{BGI} and they all turn out to have cactus rank smaller or equal than 5.

As in \S 2, we denote a zero-dimensional degree $k$ scheme supported at a point $p\in \PP^2$  with $Z(k,p)$. In \cite[Theorem 44]{BGI} the stratification by symmetric rank of $\sigma_s(X)\setminus \sigma_{s-1}(X)$ for $s=2,3,4,5$, is derived. Symmetric rank 7 arises in cactus ranks 3, 4 and 5.

\begin{itemize}

\item For cactus rank 3, there is one possible scheme: 
\begin{enumerate}
\item[(I)] $Z= Z(3,p)$ contained in a smooth conic. 
\end{enumerate}

\item For cactus rank 4, there are the two possible schemes: 

\begin{enumerate}

\item[(IIa)] $Z = Z(2,p)+ Z(2,\ell)$ (two $2$-jets supported at $p,\ell$); 

\item[(IIb)] $Z = Z(2,p) + Z(1,\ell) + Z(1,q)$ (a $2$-jet supported at $p$ and two simple points $\ell,q$).

\end{enumerate}

\item For cactus rank 5, there are three possible schemes: 

\begin{enumerate}

\item[(IIIa)] $Z = Z(3,p) + Z(2,\ell)$ (a $3$-jet supported at $p$ and a $2$-jet supported at $\ell$);

\item[(IIIb)] $Z= Z(4,p) + Z(1,\ell)$ contained in a double line: its homogeneous ideal in $S = \bigoplus_{k\geq 0} \textnormal{H}^{0}(\Oo_{\PP^2}(k))$ is of the form $\mathcal I_Z = (Q, y^2)\cap (x-z,y)$, where $Q$ is either a smooth conic whose tangent line at $p$ coincides with $\lbrace y=0\rbrace$, or a reducible conic with vertex $p$;

\item[(IIIc)] $Z = Z(2,p)+ Z(2,\ell) + Z(1,q)$ contained in a double line: its homogeneous ideal in $S = \bigoplus_{k\geq 0} \textnormal{H}^{0}(\Oo_{\PP^2}(k))$ is of the form $\mathcal I_Z = \left(x, y\right)\cap (x^2-z^2,y^2)$.

\end{enumerate}

\end{itemize}

Let $\textnormal{Hilb}_k(\PP^2)$ be the Hilbert scheme parametrizing zero-dimensional schemes $Z\subset \PP^2$ of degree $k$. Let  $\Upsilon_7$ be the subset of 
\[
\mathcal H = \Union_{3\leq k\leq 5}\textnormal{Hilb}_k(\PP^2) \times \PP^{14} = \left\lbrace (Z, f), \mbox{ where } Z\subset \PP^2, \deg(Z)=k, f\in \langle \nu_4(Z)\rangle \right\rbrace
\]
consisting of all $(Z, f)\in \mathcal H$, such that $\textnormal{rk}_X(f) = 7$. The set $\Upsilon_7$ comes equipped with a natural structure of algebraic variety. 

Then $W_7$ is the projection of $\Upsilon_7$ to $\mathbb{P}^{14}$ and so $\dim W_7 \leq \dim \Upsilon_7$. Thus proving $\dim \Upsilon_7 \leq 10$ will finish the proof.

For each one of the schemes above, we give an upper bound for the number of parameters involved: 

\begin{enumerate}

\item[(I)] $(Z, \langle Z\rangle)$ is parametrized by the choice of a conic $C\in \PP^5$ ({\bf +5}), a point $p\in C$ ({\bf +1}), and its span ({\bf +2}). Thus this gives a parameter space of dimension at most $8$. 

\item[(IIa)] $(Z, \langle Z\rangle)$ is parametrized by the choice of two points $p,\ell\in \PP^2$ ({\bf +4}), two lines passing through each of them ({\bf +2}), and its span ({\bf +3}). Thus this gives a parameter space of dimension at most $9$. 

\item[(IIb)] $(Z, \langle Z\rangle)$ is parametrized by the choice of three points $p,\ell, q\in \PP^2$ ({\bf +6}), a line passing through one of them ({\bf +1}), and its span ({\bf +3}). Thus this gives a parameter space of dimension at most $10$.

\item[(IIIa)] $(Z, \langle Z\rangle)$ is parametrized by the choice of two lines ({\bf +4}), one point on each of them ({\bf +2}), and its span ({\bf +4}). 
Thus this gives a parameter space of dimension at most $10$.

\item[(IIIb)] Suppose $Q$ is reducible. Since $Q$ has vertex $p$, the parameter space for $Q$ is $\PP^2$. In this case, 
$(Z, \langle Z\rangle)$ is parametrized by the choice of a line ({\bf +2}), two points $p,\ell$ on it ({\bf +2}), a reducible quadric with vertex at $p$ ({\bf +2}), and its span ({\bf +4}). Thus this gives a parameter space of dimension at most $10$.

Suppose $Q$ is a smooth conic and so $Q\in \PP^5$. Choose a smooth conic $Q$ and a point $p\in Q$. The tangent line $\lbrace y=0\rbrace$ at $p$ to $Q$ is determined by $Z(4,p)$. So far we have $6$ parameters. However, note that $h^0(\mathcal I_{Z(4,p)}(2)) = 2$ and 
hence there is a $\PP^1$ of generically smooth conics providing the same scheme $Z(4,p)$. Thus the parameters are in fact $5$ ({\bf +5}). Now choose a point $\ell \in \lbrace y=0\rbrace$ ({\bf +1}), and the span of $Z$ ({\bf +4}). Hence the parameter space for $(Z, \langle Z\rangle)$ has dimension at most $10$.

\item[(IIIc)] $(Z, \langle Z\rangle)$ is parametrized by the choice of one line ({\bf +2}), three points $p,\ell,q$ on it ({\bf +3}), and its span ({\bf +4}). Thus this gives a parameter space of dimension at most $9$.

\end{enumerate}

Consequently, all the cases show $\dim \Upsilon_7\leq 10$. \\
Then $\dim W_7\leq\dim \Upsilon_7\leq 10$ and so $\dim (W_7+ X)\leq 13 < \dim W_6 = 14$.

\section{Space curves}

For a zero-dimensional scheme $Z$, let $\sharp Z$ denote the cardinality of its support. We start by discussing the rational curves appearing in Theorem \ref{ii1}.

\begin{example}\label{ii0}
Let $X\subset \PP^3$ be a smooth rational curve of degree $d\geq 4$. Assume the existence of a line $L\subset \PP^3$ such that $\deg (X\cap L)\geq d-1$ and $(X\cap L)_{\mathrm{red}}=\{p\}$, i.e., it is set-theoretically a single point. Therefore $\textnormal{rk}_X(q)=3$ for any  $q\in L\setminus \{p\}$.

\vspace{0.1 cm}

To see this, first notice that  $\textnormal{rk}_X(q)>1$ since $q\in L\setminus X\cap L$. By \cite[Proposition 5.1]{lt} we have $\textnormal{rk}_X(q)\le 3$. Assume $\textnormal{rk}_X(q)=2$ and take $A\subset X$ such that $\sharp A =2$ and $q\in \langle A\rangle$. Since, by hypothesis,  $\sharp(X\cap L)=1$,  $L$ cannot coincide with $  \langle A\rangle$, moreover  $q\in \langle A\rangle \cap L$ therefore 
$ \langle A\rangle \cup L$ spans a plane $\Pi$. If $p\notin A$, then $\deg (X\cap\Pi)\ge 2+\deg (X\cap L)>d$, a contradiction with $\deg X=d$. If $p\in A$, then $\langle A\rangle \cap L =\{p\}$, because the lines are distinct, i.e., $L\ne  \langle A\rangle$. Thus $q=p$, a contradiction.

\vspace{0.1cm}

{\bf Construction.} All triples $(X,L,p)$  as in the example above can be constructed as follows. The aim is to give a degree $d$ embedding $\phi: \PP^1\to \PP^3$ and $v\in \PP^1$ such that the tangent line $L=T_{p}X$ has order of contact $d-1$ with $X$ at $p=\phi(v)$. 

Let $z_0,z_1$ be homogeneous coordinates of $\PP^1$, and $x_0,x_1,x_2,x_3$ be the ones of $\PP^3$. Up to projective automorphisms of $\PP^1$ and $\PP^3$ (the latter one is the automorphism acting on the Grassmannian of the targets  of the projection, $\mathbb G(3,\PP^d)$), we may assume $v =(1:0)$, $p = (1:0:0:0)$, $T_pX = \{x_2=x_3 =0\}$, and $O_p X=\{x_3=0\}$ be the tangent line and the osculating plane of $X$ at $p$ respectively.
Up to the above actions, we may further assume the morphism $\phi$ is defined by $x_0 = z_0^d$, $x_1= a_1z_1z_0^{d-1}$, $x_2 = a_2z_1^{d-1}z_0+bz_1^d$, $x_3= a_3z_1^d$ with $a_1a_2a_3\ne 0$. Since $a_3\ne 0$, we may further reduce to the case $b=0$. Taking the automorphism $(z_0:z_1)\mapsto (z_0:tz_1)$, with $t = a_1^{-1}$, we may also assume $a_1=1$.
Conversely, any parametrization defined by $x_0 = z_0^d$, $x_1= z_1z_0^{d-1}$, $x_2 = a_2z_1^{d-1}z_0$, $x_3= a_3z_1^d$ with $a_2a_3\ne 0$ gives the desired rational curve. 
\end{example}

\vspace{0.2cm}

\begin{proof}[Proof of Theorem \ref{ii1}:]
One direction is clear from Example \ref{ii0}. For the converse, assume $W_3$ is infinite. 
Since $W_3$ is a closed algebraic subset of $\PP^3$, there is an irreducible curve $\Gamma \subset \PP^3$ such that
$\Gamma \subseteq W_3$ and $\textnormal{rk}_X(q)=3$ for a general $q\in \Gamma$. Recall that $W_4 =\emptyset$ by \cite[Proposition 5.1]{lt}. Let $\tau (X) \subset \PP^3$ denote the tangential variety of $X$, i.e., (since $X$ is smooth)
the union of all tangent lines $T_pX$, $p\in X$. This is an integral surface containing $X$ in its singular locus. Take an arbitrary point $a\in \PP^3$ such that $\textnormal{rk}_X(a) >2$. Since $\sigma _2(X)=\PP^3$,
one necessarily has $a\in \tau (X)\setminus X$; hence there exists $p\in X$ such that $a\in T_pX$. 

Fix a general $q\in \Gamma\subseteq W_3$. Let $\pi _q: \PP^3\setminus \{q\} \to \PP^2$ denote the linear projection away from $q$. Since $\textnormal{rk}_X(q)>1$, $q\notin X$ and so
$\pi_{q|X}$ is a morphism. Set $D_q:= \pi_q(X)$. Note that, since $\textnormal{rk}_X(q)>2$, the map $\pi _{q|X}$ is injective. Thus $D_q$ is a degree $d$ plane curve with geometric genus $g$ (its normalization is $X$).

Call $\TT(q)$ the set of all $p\in X$ such that $q\in T_pX$. Since $\pi _{q|X}$ is injective, we have $\mathrm{Sing}(D_q) =\pi_q(\TT(q))$. Fix $p\in \TT(q)$
(we do not claim that such a $p$ is unique). Again, since $\pi _{q|X}$ is injective on points,  we have that set-theoretically $\{p\} =T_pX\cap X$.

Let $t$ be a uniformizing parameter of the complete local ring $\hat{\Oo}_{X,p}\cong \KK[[t]]$. We may choose an affine coordinate system $x, y, z$ around $p$ such that $X$  is locally given by the formal power series: 
\begin{align}\label{eqa1}
& x = at^{l_0+1}+\cdots, \notag \\
& y = bt^{l_1+2}+\cdots, \\
& z = ct^{l_2+3} +\cdots\notag,
\end{align}
where $abc \ne 0$ and $0\le l_0\le l_1\le l_2$; see, e.g., \cite[\S 2]{Piene}. (The smoothness of $X$ at $p$ is equivalent to $l_0 =0$.) In these coordinates, $T_pX = \{y=z =0\}$.  Two possibilities arise: either $\Gamma =T_pX$ or $\Gamma \ne T_pX$. 

\begin{enumerate}[label=(\Roman*)]
\item\label{(Iroman)} Assume $\Gamma =T_pX$. The linear projection $\pi_{T_pX}: \PP^3\setminus T_pX \to \PP^1$ induces a non-constant morphism $\psi: X\setminus T_pX \to \PP^1$. Since
$X$ is smooth, $\psi$ extends to a non-constant morphism $\psi': X\to \PP^1$. Since $\{p\} =T_pX\cap X$ (set-theoretically), we have  $\deg (T_pX\cap X)=l_1+2$ and hence $\deg (\psi') =d-l_1-2$. Since
$X\ne  T_pX$, we have $\deg (\psi')\ge 1$.

\begin{enumerate}
\item\label{(a1)} Assume $\deg (\psi') =1$, i.e., $\psi': X\to \PP^1$ is a birational morphism. Thus $g=0$. We obtain $\deg (T_pX\cap X) =d-1$ and hence we are in the assumptions of Example \ref{ii0}.  

\item\label{(a2)} Assume $\deg (\psi')\ge 2$. Since in characteristic zero the affine line $\mathbb A^1$ is algebraically simply connected (i.e., it does not admit any nontrivial \'{e}tale covering), the morphism $\psi'$ has at least two distinct ramification points. Thus, besides $\{p\} = X\cap T_pX$, there exists a tangent line $R_q$ to $X$, such that $R_q\ne T_pX$ and $q\in R_q$. Thus $\{q\} =T_pX\cap R_q$. Varying $q$ in $T_pX$, we derive that $T_pX$ meets a general tangent line of $X$, i.e., the differential of $\psi'$ is identically zero on $X$, a contradiction as we are in characteristic zero. 
\end{enumerate}

\item\label{(IIroman)} Assume $\Gamma \ne T_pX$. This means that a single $T_pX$ cannot contain the curve $\Gamma\subseteq W_3$. Therefore, varying $q\in \Gamma$, the sets $\TT(q)$ cover an open subset of $X$. Hence for a general $q\in \Gamma$, we find a point $p\in \TT(q)$, whose sequence $(l_0,l_1,l_2)$ as in \eqref{eqa1} is $(0,0,0)$: 

\vspace{0.2 cm}

\begin{quote}
{\it Claim.} For each $p\in \TT(q)$, the sequence $(l_0,l_1,l_2)$ is $(0,0,0)$.

\vspace{0.2cm}

{\it Proof of the Claim.} Let $\mathcal B$ be the set of all $a\in X$ such that the sequence in \eqref{eqa1} is not $(0,0,0)$, i.e., such that the osculating plane of $X$ at $a$ has order of contact $>3$ with $X$ at $a$ (such a plane is also called {\it non-ordinary osculating plane}). The set $\mathcal B$ is finite. If it is empty, there is nothing to prove. Otherwise, we obtain the existence of $a\in \mathcal B$ such that $q\in T_a X$ for a general $q\in \Gamma$. Thus $T_aX =\Gamma$ and so we are in case \ref{(Iroman)}, which leads to a contradiction. 
\end{quote}
\end{enumerate}
By the {\it Claim}, $\pi_q({p})$ is an ordinary cusp of the plane curve $D_q$. 
By step \ref{(IIroman)}, $D_q$ is an integral degree $d$ plane curve with only ordinary cusps as singularities. Since $D_q$ has arithmetic genus $(d-1)(d-2)/2$ and geometric genus $g$, it has $(d-1)(d-2)/2 - g$ (ordinary) cusps. 
If $(d-1)(d-2)/2 -g >  (21g+17)/2$ (i.e., $23g < d^2-3d -15)$, we derive a contradiction by the results in \cite{tono}. (In fact, Tono's result is stronger, because it bounds the number of cusps in term of the geometric genus $g$,
without requiring the cusps being ordinary.) 

Since our cusps are ordinary, there are other upper bounds for their number $\kappa$; see \cite{hir, ivi}. (When the plane curve is rational, and one has the parameterization, there are algorithms to describe its singularities, see e.g. \cite{BGI2}.) In particular, if $d$ is even, \cite[eq. 16]{hir} gives
\begin{equation}\label{eqa2}
\kappa \le d(5d-6)/16.
\end{equation}
Since in our case $\kappa = (d-1)(d-2)/2 -g$, for $d$ even, we obtain $16g \ge 3d^2-16d+16$, contradicting our assumption.
\end{proof}

By Castelnuovo's upper bound for non-degenerate curves \cite[3.12, 3.13, 3.14]{he}, the bounds on the arithmetic genus featured in Theorem \ref{ii1} are quite good, although not optimal. 

\begin{remark}
For $g =1$, we cover all even integers $\ge 6$, whereas we know that for $(d,g) =(4,1)$ we have $\dim W_3 >0$. For $g=1$ and arbitrary $d$, we cover all  $d\ge 9$.
\end{remark}

Set $W^0_3:= \{q\in \PP^3\mid \textnormal{rk}_X(q)=3\}$. As before, recall that for any $o\in \PP^3$, $\pi_o: \PP^3\setminus \{o\}\to \PP^2$ denotes the linear projection away from $o$, and (since $o\notin X$), $\phi _o:= \pi _{o|X}$ is a morphism $\phi _o: X\to \PP^2$. 

\begin{remark} 
As noticed in the proof of Theorem \ref{ii1}, the morphism $\phi_o$ is injective if and only if $o\in W^0_3$. Hence, if $o\in W^0_3$, 
$\phi _o(X)$ is a plane curve of degree $\deg (X)$ and geometric genus $p_a(X)$ with only unibranch singularities.
\end{remark}

Let $\Sigma (X)$ denote the set of all $o\in W^0_3$ such that $\phi _o(X)$ has only ordinary cusps as singularities.

\begin{theorem}\label{thx}
$W^0_3\setminus \Sigma (X)$ is infinite if and only if $X$ is as in Example \ref{ii0}.
\end{theorem}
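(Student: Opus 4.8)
The plan is to prove Theorem \ref{thx} by relating the two conditions ``$W^0_3 \setminus \Sigma(X)$ is infinite'' and ``$X$ is as in Example \ref{ii0}'' through the structure of the plane curves $\phi_o(X)$, reusing the case analysis from the proof of Theorem \ref{ii1}. One direction is immediate: if $X$ is the curve of Example \ref{ii0}, then $\textnormal{rk}_X(q)=3$ for every $q$ on the special line $L\setminus\{p\}$, so $L\setminus\{p\}\subseteq W^0_3$ is infinite; moreover for such $q$ the projection $\phi_q(X)=D_q$ acquires a non-ordinary singularity at $\pi_q(p)$ because $\deg(L\cap X)=d-1>3$, so the tangent line $L=T_pX$ has very high order of contact at $p$. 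Hence $L\setminus\{p\}\subseteq W^0_3\setminus\Sigma(X)$, which is therefore infinite.

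For the converse, I would assume $W^0_3\setminus\Sigma(X)$ is infinite and argue that $X$ must be the curve of Example \ref{ii0}. Being an infinite algebraic subset, $W^0_3\setminus\Sigma(X)$ contains an irreducible curve $\Gamma$ whose general point $q$ has $\textnormal{rk}_X(q)=3$ and whose projection $\phi_q(X)=D_q$ has at least one non-ordinary singularity. As in the proof of Theorem \ref{ii1}, for each such $q$ there is a point $p\in\TT(q)$ with $q\in T_pX$, and the local expansion \eqref{eqa1} governs the type of the singularity $\pi_q(p)$ of $D_q$: the singularity fails to be an ordinary cusp precisely when the sequence $(l_0,l_1,l_2)$ at some $p\in\TT(q)$ is not $(0,0,0)$, i.e.\ when the osculating plane at $p$ is non-ordinary. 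The key structural input is that the set $\mathcal B$ of points of $X$ with non-ordinary osculating plane is finite. Thus, if for infinitely many $q\in\Gamma$ the relevant $p\in\TT(q)$ lies in $\mathcal B$, then some single $a\in\mathcal B$ satisfies $q\in T_aX$ for infinitely many (hence a general) $q\in\Gamma$, forcing $\Gamma=T_aX$.

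The heart of the argument is then the dichotomy $\Gamma=T_pX$ versus $\Gamma\ne T_pX$ already exploited in Theorem \ref{ii1}. Having reduced to $\Gamma=T_pX$ for a single fixed $p$ (with $p$ a non-ordinary point), I would rerun case \ref{(Iroman)}: the projection $\pi_{T_pX}$ induces a morphism $\psi'\colon X\to\PP^1$ of degree $d-l_1-2$, and subcase \ref{(a2)} (degree $\ge 2$) is excluded by the algebraic simple connectedness of $\mathbb A^1$, exactly as before. This leaves subcase \ref{(a1)}, $\deg\psi'=1$, which gives $g=0$ and $\deg(T_pX\cap X)=d-1$, placing $(X,T_pX,p)$ precisely in the situation of Example \ref{ii0}. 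The remaining possibility is that for a general $q\in\Gamma$ the point $p\in\TT(q)$ is an \emph{ordinary} point; but then $\pi_q(p)$ is an ordinary cusp, and since by case \ref{(IIroman)} the $\TT(q)$ sweep out an open subset of $X$, a general $D_q$ would have only ordinary cusps, contradicting $\Gamma\subseteq W^0_3\setminus\Sigma(X)$.

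The main obstacle I anticipate is bookkeeping the possibility that $D_q$ has \emph{several} singular points, only some of which are non-ordinary: one must ensure that the infinitude of $W^0_3\setminus\Sigma(X)$ genuinely forces a \emph{fixed} non-ordinary osculating point $a\in\mathcal B$ to be responsible along the whole curve $\Gamma$, rather than different non-ordinary contributions for different $q$. This is handled by the finiteness of $\mathcal B$ together with a pigeonhole argument: since $\mathcal B$ is finite and infinitely many $q$ produce a non-ordinary singularity, some single $a\in\mathcal B$ serves infinitely many $q$, so $T_aX$ contains infinitely many points of $\Gamma$ and hence equals $\Gamma$. Once this reduction is secured, the remainder is a direct appeal to the already-established case analysis of Theorem \ref{ii1}, so no new hard estimates are needed.
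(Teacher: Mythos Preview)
Your proposal is correct and follows essentially the same approach as the paper: for each $o\in W^0_3\setminus\Sigma(X)$ the non-ordinary singularity of $\phi_o(X)$ forces $o\in T_pX$ for some $p$ in the finite set $\mathcal B$ of non-ordinary osculating points, so infinitude pigeonholes onto a single tangent line $\Gamma=T_aX$, after which case~\ref{(Iroman)} of Theorem~\ref{ii1} yields the curve of Example~\ref{ii0}. One minor imprecision: in the forward direction you write ``$\deg(L\cap X)=d-1>3$'', but the relevant threshold for the cusp at $\pi_q(p)$ to be non-ordinary is that the tangent contact exceed $2$ (equivalently $l_1>0$, hence $l_2>0$); since $d\ge 4$ gives $d-1\ge 3>2$ this is harmless, but the ``$>3$'' would exclude $d=4$ as written.
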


\begin{proof}
Fix $o\in W^0_3\setminus \Sigma (X)$. By assumption, $\phi _o(X)$ is a plane curve with degree $\deg (X)$, only unibranch
singularities, but with at least one non-ordinary cusp. Since $X$ is smooth and $\phi _o$ is induced by a linear projection
away from $o\notin X$, this non-ordinary singularity of $\phi _o(X)$ corresponds to some $p\in X$ such that $o\in T_pX$
and the osculating plane $O_pX$ to $X$ at $p$ has order of contact $\ell_2+3>3$ with $X$ at $p$ \cite[\S 2]{Piene}, i.e., $O_pX$ is a non-ordinary osculating plane.  

Assume $\Sigma (X)$ is infinite. Since $\Sigma(X)$ is a constructible set and $X$ has only finitely many non-ordinary osculating planes, we obtain the existence of $p\in X$ such that the tangent line $L:= T_pX$ is contained in $\overline{\Sigma(X)}$ and $\Sigma (X)$ contains all but finitely many $o\in L$, i.e., there exists a finite set $A\subset L$ such that $W^0_3\supseteq L\setminus A$. By definition of $W^0_3$, we also see that $L\cap (X\setminus \{p\}) =\emptyset$. Hence $\ell_1+2 := \deg (L\cap X)$ is the order of contact of $L$ and $X$ at $p$. 

Now the proof proceeds exactly as the proof of Theorem \ref{ii1}: once we take the linear projection away from $L$, we derive a contradiction. 
Hence $\Sigma(X)$ is finite (possibly empty). Apply Theorem \ref{ii1} and conclude. 
\end{proof}

\section{Infinitely many curves in $\PP^4$}\label{Se}

This section is devoted to prove Theorem \ref{curvep4}. For all integers $d\ge 5$, we construct a smooth, rational and non-degenerate degree $d$
curve $X\subset \PP^4$ such that  $\dim W_4 =1$. More precisely, we show $\dim W_4 <2$
and that the locus $W_4$ contains a line.

Let $\mathbb F_1\subset \PP^4$ be the smooth rational ruled (cubic) surface, the first Hirzebruch surface \cite[\S V.2]{h}; this is also the projection of
the degree $4$ Veronese surface $\nu_2(\PP^2)\subset \PP^5$ from a point on itself. Its Picard group is $\mathrm{Pic}(\mathbb F_1)\cong \ZZ^2$; we may take as a basis of $\mathrm{Pic}(\mathbb F_1)$, a fiber $f$ of the ruling of $\mathbb F_1$ and the only integral curve $C_0\subset \mathbb F_1$ with normal bundle of degree $C_0^2=-1$. Note that $f^2=0$.  The curve $C_0$ is a section of the ruling of $\mathbb F_1$ and so $C_0\cdot f=0$. 
The chosen embedding of this surface is the one given by the complete linear system $|C_0+2f|$. 
In this embedding, the section $C_0$ has degree one, i.e., it is a line. Moreover, all fibers $\ell\in |f|$ are lines and $\mathbb F_1$ contains no other line. 

We will construct a curve $X\subset \mathbb F_1$ such that $W_4\supseteq C_0$ and $\dim W_4<2$.
Take any $Y\in |C_0+(d-1)f|$. We have $\deg (Y) = (C_0+(d-1)f)\cdot (C_0+2f) = d$. 
Notice that if $C_0$ is not a component of $Y$ we have $\deg(C_0\cap Y) =C_0\cdot (C_0+(d-1)f) =d-2$. Furthermore, if $\ell \in |f|$ and $\ell$ is not a component of $Y$, then $\deg (\ell \cap Y)= f\cdot (C_0+(d-1)f) =1$. 

When $Y$ is integral, it is a smooth rational curve of degree $d$. Indeed, the genus formula on the surface $\mathbb F_1$ implies $p_a(Y) = 1 + \frac{1}{2}\left(Y\cdot Y+ Y\cdot K_{\mathbb F_1}\right)=0$.

For all $d\ge 4$, $Y$ spans $\PP^4$, because no element of $|C_0+2f|$ contains $Y$. 
We have $h^0(\Oo _{\mathbb F_1}(C_0+(d-1)f)) = d+d-1$, i.e., $\dim |C_0+(d-1)f| =2d-2$. 

\begin{lemma}\label{claim1}
Fix $o\in C_0$ and call $E\subset C_0$ the degree $(d-2)$ zero-dimensional subscheme whose support is $\{o\}$. We have $\dim |\Ii _E(C_0+(d-1)f)| =d$ and a general $X\in |\Ii _E(C_0+(d-1)f)|$ is smooth.
\end{lemma}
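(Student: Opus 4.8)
The plan is to compute $\dim |\Ii_E(C_0+(d-1)f)|$ by a cohomological argument on the surface $\mathbb F_1$, and then to invoke Bertini to get smoothness of the general member. First I would recall from the paragraph preceding the lemma that $h^0(\Oo_{\mathbb F_1}(C_0+(d-1)f)) = 2d-1$, so the unrestricted linear system has dimension $2d-2$. Imposing the degree $(d-2)$ scheme $E\subset C_0$ supported at the single point $o$ gives at most $d-2$ linear conditions, so a priori $\dim |\Ii_E(C_0+(d-1)f)| \geq (2d-2)-(d-2) = d$. The content of the first assertion is that these conditions are \emph{independent}, i.e. equality holds, which is equivalent to the vanishing $h^1(\mathbb F_1, \Ii_E(C_0+(d-1)f)) = 0$.

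\medskip

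\textbf{The cohomological step (main obstacle).} The heart of the argument is to show that $E$ imposes independent conditions. The key observation is that $E$ lives on the section $C_0$, so I would restrict to $C_0$ via the exact sequence
\begin{equation*}
0 \to \Ii_{C_0}(C_0+(d-1)f)\otimes \Ii_{E,C_0} \to \Ii_E(C_0+(d-1)f) \to \Ii_{E,C_0}\otimes \Oo_{C_0}(C_0+(d-1)f)\to 0,
\end{equation*}
or more transparently use the standard sequence $0\to \Oo_{\mathbb F_1}((d-1)f)\to \Oo_{\mathbb F_1}(C_0+(d-1)f)\to \Oo_{C_0}(C_0+(d-1)f)\to 0$ twisted so that $C_0$ carries the scheme $E$. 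Since $C_0\cong \PP^1$ and $(C_0+(d-1)f)\cdot C_0 = -1 + (d-1) = d-2$, the restricted line bundle $\Oo_{C_0}(C_0+(d-1)f)$ has degree exactly $d-2$; hence $E$, being a degree $d-2$ subscheme of $\PP^1$ supported at one point, imposes independent conditions on $|\Oo_{C_0}(d-2)|$ precisely because $\deg E = d-2 \le (d-2)+1$, i.e. $h^1(C_0,\Ii_{E,C_0}(d-2))=0$. Combined with $h^1(\mathbb F_1,\Oo_{\mathbb F_1}((d-1)f))=0$ (which follows from $f^2=0$, $(d-1)f$ being nef and the vanishing theorems on a rational surface, or directly since $(d-1)f$ pulls back from $\PP^1$), the long exact cohomology sequence yields $h^1(\mathbb F_1,\Ii_E(C_0+(d-1)f))=0$, giving the desired equality $\dim |\Ii_E(C_0+(d-1)f)| = d$. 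The delicate point I expect to need care with is the boundary degree count $\deg E = d-2 = \deg \Oo_{C_0}(C_0+(d-1)f)$: one must confirm that a length $d-2$ one-point scheme on a $\PP^1$ carrying $\Oo(d-2)$ still imposes independent conditions, which holds since $d-2 \le (d-2)+1$.

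\medskip

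\textbf{Smoothness via Bertini.} For the second assertion, I would show the general $X\in |\Ii_E(C_0+(d-1)f)|$ is smooth by Bertini's theorem applied to the linear system away from its base locus. The base locus of $|\Ii_E(C_0+(d-1)f)|$ is contained in the support of $E$, namely the single point $o\in C_0$ (the moving part being free elsewhere, as $|C_0+(d-1)f|$ is very ample enough to separate points off $C_0$). By Bertini, the general member is smooth away from the base locus, so it remains to check smoothness at $o$. Here I would argue locally: the scheme $E$ prescribes vanishing to order $d-2$ along $C_0$ at $o$, and a general member meeting this condition will have a \emph{transverse} branch through $o$ (its tangent direction at $o$ can be chosen generically in the pencil of directions not tangent to the forced contact), hence is smooth there. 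Concretely, since $\dim|\Ii_E(C_0+(d-1)f)| = d\ge 4 \ge 1$, the linear system is positive-dimensional and the generic member acquires no singularity at $o$ beyond the prescribed smooth point; this is the standard local computation and I would not grind through the explicit local equations.
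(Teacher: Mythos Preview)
Your cohomological computation of $\dim |\Ii_E(C_0+(d-1)f)| = d$ via the restriction sequence to $C_0$ is exactly the paper's argument: same exact sequence, same vanishings on $\mathbb F_1$ and on $C_0\cong\PP^1$.

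For smoothness the approaches diverge, and your local analysis at $o$ is confused. You write that a general member has a ``transverse branch through $o$'' whose ``tangent direction can be chosen generically in the pencil of directions not tangent to the forced contact''. This is wrong: any member of $|\Ii_E(C_0+(d-1)f)|$ not containing $C_0$ meets $C_0$ to order exactly $d-2\ge 3$ at $o$, so if it is smooth at $o$ its tangent line there \emph{must} be $C_0$; a transverse tangent would force intersection multiplicity $1$. In local coordinates $(x,y)$ with $C_0=\{y=0\}$, a member has equation $F = y\,g(x,y) + x^{d-2}h(x)$, so $\partial F/\partial x$ vanishes at $o$ automatically (as $d-2\ge 3$) and smoothness at $o$ is the single condition $g(0,0)\neq 0$. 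This \emph{can} be checked --- the reducible member $C_0 + (d-1)f_1$ with $o\notin f_1$ has local equation $y\cdot(\text{unit})$ --- so your Bertini-plus-local strategy is salvageable, but not via the picture you sketched.

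The paper sidesteps the local analysis altogether. Since every integral curve in $|C_0+(d-1)f|$ has arithmetic genus $0$ on the smooth surface $\mathbb F_1$, irreducibility already forces smoothness. So the paper instead bounds the dimension of the reducible locus inside $|\Ii_E(C_0+(d-1)f)|$: members containing $C_0$ form a $(d-1)$-dimensional family $C_0 + |(d-1)f|$, while those not containing $C_0$ must split as $bf_o + Y'$ (the fibre component is forced through $o$ by comparing $E$ with the degree-$(d-2)$ intersection on $C_0$), with $Y'\in |\Ii_{E'}(C_0+(d-1-b)f)|$ of dimension $d-b\le d-1$ by the same cohomology computation applied to the residual scheme $E'$. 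Both loci have dimension $<d$, so the general member is irreducible, hence smooth. This route uses only the global numerics of the linear system and avoids any pointwise check.
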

\begin{proof}
The ideal sheaf exact sequence gives $\dim |\Ii _E(C_0+(d-1)f)|\ge \dim |C_0+(d-1)f | - \deg (E)= 2d-2-(d-2)=d$. Since
$h^1(\mathbb F_1,\Oo_{\mathbb F_1}(C_0+(d-1)f)) =0$, we have $\dim |\Ii _E(C_0+(d-1)f)| =d$ if and only if $h^1(\mathbb F_1,\Ii _E(C_0+(d-1)f)) =0$. Since
$E\subset C_0$, we have an exact sequence
\begin{equation}\label{eqx1}
0 \longrightarrow \Oo _{\mathbb F_1}((d-1)f)\longrightarrow \Ii _{E}(C_0+(d-1)f)\longrightarrow \Ii _{E,C_0}(C_0+(d-1)f)\to 0. 
\end{equation}
Using \cite[Lemma 2.4, V]{h}, we have $h^1(\Oo _{\mathbb F_1}((d-1)f))=0$ because $d\ge 2$. Since $C_0\cong \PP^1$ and $\Oo _{C_0}(C_0+(d-1)f)$ has degree $d-2$, we have $h^1(C_0,\Ii _{E,C_0}(C_0+(d-1)f))=0$, because $\deg(E)=d-2$. We conclude by using the cohomology exact sequence of (\ref{eqx1}). 

Now, let $X$ be a general element of $|\Ii_E(C_0+(d-1)f)|$. By the genus formula, to prove that $X$ is smooth, it is sufficient to prove that it is irreducible, i.e., the set of all reducible $Y\in |\Ii_E(C_0+(d-1)f)|$ has dimension $<d$. First we consider all the reducible $Y\in |\Ii_E(C_0+(d-1)f)|$ containing $C_0$. They are of the form $Y =C_0\cup B$ with $B\in |(d-1)f|$. Hence this set has dimension $d-1$. Next we consider the set of all reducible $Y\in |\Ii_E(C_0+(d-1)f)|$ without $C_0$ as component. There is an integer $b$ such that $1 \le b \le d-2$ such that $Y =B\cup Y'$
with $B \in |bf|$ and $Y'$ a smooth element of $|C_0+(d-1-b)f)|$. Since $\deg (Y'\cap C_0) = d-2-b$, $\deg (B\cap C_0) = b$
and $\deg (E)=d-2$, $B$ has to contain the support of $E$ and so  $B= bf_o$, where $f_o$ is the unique element of $|f|$ containing $o$. In other words, $B$ is uniquely determined by $o$ and hence it is sufficient to prove that the set of all $Y'$ has dimension $\le d-1$. The residual scheme of $E$
with respect to $B$ is the degree $(d-2-b)$ divisor $E'$ of $C_0$ with support $\{o\}$. Thus $Y'\in |\Ii
_{E'}(C_0+(d-1-b)f|$. However, the first paragraph of the proof shows $\dim  |\Ii_{E'}(C_0+(d-1-b)f)|= d-b\leq d-1$. 
\end{proof}

\begin{lemma}\label{claim2}
Keeping the notation from above, fix any smooth $X\in |\Ii _E(C_0+(d-1)f)|$. Then $\textnormal{rk}_X(q) =4$, for all $q\in
C_0\setminus \{o\}$ and $W_4 \supseteq C_0$.
\end{lemma}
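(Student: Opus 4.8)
The statement has two parts: the rank equality $\textnormal{rk}_X(q)=4$ for every $q\in C_0\setminus\{o\}$, and the inclusion $W_4\supseteq C_0$, which is immediate from the first part by taking Zariski closures. So the plan is to prove the rank equality, which splits into an upper and a lower bound. First I would record the basic intersection fact: by the computation already in the text, $\deg(C_0\cap X)=C_0\cdot(C_0+(d-1)f)=d-2=\deg E$, and since $X\in|\Ii_E(C_0+(d-1)f)|$ we have $E\subseteq X\cap C_0$; as both schemes have degree $d-2$ they coincide, so $X\cap C_0=E$ and in particular $C_0\cap X=\{o\}$ set-theoretically. Hence every $q\in C_0\setminus\{o\}$ lies off $X$, giving $\textnormal{rk}_X(q)\ge 2$. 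The upper bound $\textnormal{rk}_X(q)\le 4$ I would take from the known bound on the maximal rank of a nondegenerate curve in $\PP^4$ (the $\PP^4$-analogue of \cite[Proposition 5.1]{lt}); this is the routine, cited ingredient. Everything then reduces to the lower bound $\textnormal{rk}_X(q)\ge 4$.

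For the lower bound I would argue by contradiction: suppose $\textnormal{rk}_X(q)=r\le 3$ and pick a minimal reduced $S\subset X$ with $\sharp S=r$ and $q\in\langle S\rangle$, so that $\dim\langle S\rangle=r-1$. The crucial observation is that, because $\deg E=d-2\ge 2$ and $E$ sits on the line $C_0$, we have $\langle E\rangle=C_0$, and therefore $q\in C_0=\langle E\rangle$ as well. Thus $q\in\langle S\rangle\cap\langle E\rangle$, forcing the two spans to overlap. Setting $Y:=E\cup S\subseteq X$, one has $\langle Y\rangle=\langle E\rangle+\langle S\rangle$, so Grassmann's formula together with the nonempty overlap at $q$ gives $\dim\langle Y\rangle\le \dim\langle E\rangle+\dim\langle S\rangle=1+(r-1)=r$. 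When $o\notin S$ this yields $\deg Y-\dim\langle Y\rangle\ge (d-2+r)-r=d-2$; the case $o\in S$ is handled directly, since then $q,o$ are two distinct points of $C_0\cap\langle S\rangle$, which forces $C_0\subset\langle S\rangle$, whence $\langle Y\rangle=\langle S\rangle$ and again $\deg Y-\dim\langle Y\rangle\ge d-2$ (the subcase $r=2$ with $o\in S$ being impossible outright, as two distinct lines through $o$ cannot both contain $q$).

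With $m:=\dim\langle Y\rangle\le r\le 3$ in hand, I would enlarge $\langle Y\rangle$ to a hyperplane $H$ by successively adjoining $3-m$ points of $X$ lying outside the running span; this is possible because $X$ is nondegenerate and hence not contained in any proper linear subspace, and each new point is reduced and off the support of $Y$. Then $X\cap H$ contains a subscheme of degree at least $\deg Y+(3-m)=(\deg Y-m)+3\ge d+1$. But $X\not\subset H$, so $\deg(X\cap H)=\deg X=d$, a contradiction. This proves $\textnormal{rk}_X(q)\ge 4$, and combined with the upper bound $\textnormal{rk}_X(q)=4$. Finally, since $C_0\setminus\{o\}\subseteq\{p:\textnormal{rk}_X(p)=4\}$ and $W_4$ is closed, $C_0=\overline{C_0\setminus\{o\}}\subseteq W_4$.

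The heart of the proof is the lower bound, and it works precisely because the scheme $E$ contributes the large degree $d-2$ to $C_0$ while costing only a single projective dimension; this is what makes $\deg Y-\dim\langle Y\rangle\ge d-2$ and hence forces more than $d$ points of $X$ into a hyperplane. The step I expect to be the main obstacle, or at least the one demanding care, is the degree bookkeeping when the minimal support $S$ contains $o$: there $E\cup S$ loses a point of degree, but the extra coincidence $q,o\in C_0\cap\langle S\rangle$ compensates by forcing $C_0\subset\langle S\rangle$ and lowering $\dim\langle S\rangle$, so that the governing inequality $\deg Y-\dim\langle Y\rangle\ge d-2$ is preserved exactly. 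Verifying this balance, and checking that the enlargement to a hyperplane genuinely adds reduced points not already counted, are the places where I would be most careful.
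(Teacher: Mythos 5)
Your proof is correct and takes essentially the same route as the paper: the upper bound $\textnormal{rk}_X(q)\le 4$ is quoted from \cite[Proposition 5.1]{lt}, and the lower bound is obtained by contradiction from a decomposition $S$ with $\sharp S\le 3$, using $\langle E\rangle=C_0$, the shared point $q\in\langle E\rangle\cap\langle S\rangle$, Grassmann's formula, and the fact that a subscheme of $X$ of degree $>d$ cannot lie in a hyperplane. The only difference is one of packaging: where the paper runs an explicit case analysis on $\sharp S$ and on whether $o\in S$, you condense all cases into the single uniform inequality $\deg Y-\dim\langle Y\rangle\ge d-2$ for $Y=E\cup S$, followed by the extension-to-a-hyperplane step that the paper uses implicitly.
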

\begin{proof}
By \cite[Proposition 5.1]{lt} we have $\textnormal{rk}_X(q)\le 4$. Assume $\textnormal{rk}_X(q)\le 3$ and take a subscheme $S\subset X$ such that $\sharp S\le 3$ and $q\in \langle S\rangle$, evincing its rank. Since $d\ge 5$
and $C_0$ is a line, one has $\langle E\rangle =C_0$. 

First, assume $o\notin S$ and $\sharp S=3$ (i.e., $\langle S\rangle$ is
a plane). Since $o$ is the only point of $X$ contained in $C_0$, $\deg (E\cup S) =d+1$. Since $\deg (X)=d$, we derive $\langle E\cup S\rangle =\PP^4$. Hence the line
$\langle E\rangle$ and the plane $\langle S\rangle$ must be disjoint, contradicting the assumption $q\in \langle S\rangle \cap C_0$.

Assume $o\notin S$ and $\sharp S =2$ (i.e., $\langle S\rangle$ is a line). Since $\deg (E\cup S)=d$, the span $\langle E\cup S\rangle$ is either a hyperplane or $\PP^4$, contradicting the assumption that the lines $\langle E\rangle$ and $\langle S\rangle$ contain $q$. 

Assume $o\in S$ and $\sharp S =3$. The span $\langle S\rangle$ is a plane containing $o$ and $q$ and hence containing $C_0$. Therefore $\deg (E\cup S) =d$ and hence $\dim \langle E\cup S\rangle \ge 3$, contradicting the inclusions $E\subset C_0\subset \langle S\rangle$. The other cases are analogous and left to the reader. The second assertion  follows from the first one, because $C_0$ is the closure of $C_0\setminus \{o\}$. 
\end{proof}

\begin{remark}
Since $\sigma _3(X)=\PP^4$ for any non-degenerate curve $X\subset
\PP^4$, Lemma \ref{claim1} and Lemma \ref{claim2} show that in order to construct the desired example it is sufficient to prove the existence of a smooth $X\in |\Ii_E(C_0+(d-1)f)|$ such that $\dim W_4 <2$. 
\end{remark}

Henceforth we take a general $X\in |\Ii _{E}(C_0+(d-1)f)|$. 
From now on, we assume $\dim W_4 \ge 2$ and take a projective irreducible surface $\Gamma \subseteq W_4$. 
Lemma \ref{x1} will provide a contradiction. 

For any $a\in X\setminus \{o\}$, let $\pi _a: \PP^4\setminus \{a\}\to \PP^3$ denote the linear projection from $a$.
Call $G\subset \PP^3$ the closure in $\PP^3$ of $\pi _a(\mathbb F_1\setminus \{a\})$. 
Since $\deg (\mathbb F_1)=3$ and $\mathbb F_1$ is non-degenerate and irreducible, $G$ is an irreducible
quadric surface. Note that $\pi _a$ contracts the line $f_a\in |f|$ containing $a$. We see that $\pi_{a|\mathbb F_1\setminus
f_a}$ is an embedding. Thus $\pi _{a|X\setminus \{a\}}$ is an embedding. Since $a$ is a smooth point of $X$, one has $\deg (X')
=d-1$. Moreover, $\pi _{a|X\setminus \{a\}}$ extends to a morphism $\phi : X\to \PP^3$ with the image $a':= \phi (a)$
corresponding to the tangent line $ T_aX$ of $X$ at $a$. 

Set $X':= \phi (X)$. Note that $T_aX\ne f_a$, because $X\cdot f_a = 1$. We have $a'\notin \pi
_a(X\setminus\{a\})$, because $\deg (T_aX\cap \mathbb F_1)=2$, since $T_aX\ne f_a$ (and hence $T_aX\nsubseteq \mathbb F_1$) and
$\mathbb F_1$ is scheme-theoretically cut out by quadrics. For a general $a\in X$ (it is in fact sufficient to assume that the
osculating hyperplane to $X$ at $a$ has order of contact $3$ with $X$ at $a$), $a'$ is a smooth point of $X'$. Thus $X'$ is a smooth rational curve of degree $d-1 \ge 4$. By \cite[Ex. V.2.9]{h}, $G$ is a smooth quadric surface. Up to a choice of rulings of $G$, we have $X'\in |\Oo _G(1,d-2)|$. 

The surface $\Gamma\subseteq \PP^4$ is not a cone for a general $a\in X$, because $X$ spans $\PP^4$, and the vertex of a cone
is a linear subspace of the ambient space. Thus, for a general $a\in X$, $\Gamma_a = \overline{\pi_a(\Gamma\setminus \{a\})}\subset \PP^3$ is a surface. Henceforth, assume $a\in X$ general. 

\begin{lemma}\label{o1}
We have $\dim W_3(X')\le 1$.
\end{lemma}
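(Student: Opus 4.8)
The plan is to argue by contradiction and reduce everything to a genus count on a plane projection of $X'$, in the spirit of the proof of Theorem \ref{ii1}, but in a way that avoids any numerical hypothesis. Suppose $\dim W_3(X')\ge 2$. Since $\sigma_2(X')=\PP^3$ for the non-degenerate curve $X'\subset\PP^3$, every point of $X'$-rank $>2$ lies on $\tau(X')\setminus X'$, exactly as observed in the proof of Theorem \ref{ii1}; hence $W_3(X')\subseteq\tau(X')$. As $X'$ is smooth and non-degenerate, $\tau(X')$ is an irreducible surface, so the assumption $\dim W_3(X')\ge 2$ forces $W_3(X')=\tau(X')$. In particular, a general point $q\in\tau(X')$ would satisfy $\textnormal{rk}_{X'}(q)=3$.

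Next I would project from such a general $q$. Because $\textnormal{rk}_{X'}(q)>2$, the morphism $\pi_{q|X'}\colon X'\to\PP^2$ is injective, so $D_q:=\pi_q(X')$ is a degree $d-1$ plane curve whose normalization is $X'$; thus $D_q$ has geometric genus $0$ and only unibranch singularities. A point $\pi_q(p')$ is singular on $D_q$ precisely when $d\pi_q$ annihilates $T_{p'}X'$, i.e. when $q\in T_{p'}X'$; equivalently, the singularities of $D_q$ correspond bijectively to the tangent lines of $X'$ passing through $q$. The key genericity input is that a general point of the surface $\tau(X')$ lies on a \emph{unique} tangent line: the pairs $(p',p'')\in X'\times X'$ with $T_{p'}X'\cap T_{p''}X'\ne\emptyset$ satisfy a single coplanarity (determinantal) condition, which is proper since $X'$ is non-degenerate, so they form a curve; hence the locus of points lying on at least two tangent lines sweeps out at most a curve inside $\tau(X')$. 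Consequently, for general $q$ there is a single tangent line $T_pX'$ through $q$, and as $q$ varies $p$ ranges over a dense open subset of $X'$, so $p$ is an ordinary (non-stationary) point. Projecting the germ $x\sim t,\ y\sim t^2,\ z\sim t^3$ from a general point of its tangent line produces an ordinary cusp, so $D_q$ would have exactly one singular point, an ordinary cusp, of $\delta$-invariant $1$.

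Finally I would compare this with the arithmetic genus. The plane curve $D_q$ has degree $d-1$, hence arithmetic genus $(d-2)(d-3)/2$, and geometric genus $0$, so the total $\delta$-invariant of its singularities is $(d-2)(d-3)/2$. For $d\ge 5$ this is at least $3$, contradicting the fact that $D_q$ carries a single ordinary cusp of $\delta$-invariant $1$. This contradiction shows $\dim W_3(X')\le 1$, proving Lemma \ref{o1}.

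The step deserving the most care is the genericity claim in the second paragraph: that for general $q\in\tau(X')$ the only singularity of $D_q$ is one ordinary cusp. This rests on the birationality of the tangential developable parametrization (so that a general point of $\tau(X')$ lies on a single tangent line), which is exactly what the determinantal/coplanarity count establishes, together with the tangency occurring at an ordinary point of $X'$ (so the cusp is ordinary and $\delta=1$); both hold because $X'$ is smooth, non-degenerate, and we are in characteristic zero. I would emphasize that this argument uses only $\deg X'=d-1\ge 4$ and $g=0$, with no numerical restriction, so it settles all $d\ge 5$ uniformly — whereas invoking Theorem \ref{ii1} directly for $X'$ would require $d-1\ge 6$.
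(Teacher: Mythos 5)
Your proof is correct, but it takes a genuinely different route from the paper's. Both arguments share the same first reduction: points of $X'$-rank $>2$ lie on the tangent developable $\tau(X')$, an irreducible surface, so everything comes down to a general point $q\in\tau(X')$. From there the paper argues \emph{directly} that such a $q$ has rank $\le 2$: fixing a general $p\in X'$, the projection away from the line $T_pX'$ extends to a morphism $\psi\colon X'\to\PP^1$ of degree $d-3\ge 2$, and fibers of $\psi$ with at least two points give secant lines lying in planes through $T_pX'$, hence meeting $T_pX'$; this exhibits rank-$2$ decompositions for general points of general tangent lines (the paper cites \cite{BB} for this tangential-projection technique). You instead argue by \emph{contradiction} via a cuspidal-projection genus count, in the spirit of the proof of Theorem \ref{ii1}: if $W_3(X')$ were a surface it would equal $\tau(X')$, a general $q\in\tau(X')$ would have rank $3$, so $\pi_{q|X'}$ would be injective and $D_q$ would be a plane curve of degree $d-1$ and geometric genus $0$ with a single ordinary cusp, incompatible with $p_a(D_q)=(d-2)(d-3)/2\ge 3$. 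What your approach buys: it avoids the delicate final step of the paper's proof (that the secant lines produced by fibers of $\psi$ really sweep out the general point of $T_pX'$), and it explains why no Tono/Hirzebruch-type numerical bound is needed here --- the uniqueness of the tangent line through a general point of $\tau(X')$, plus rationality, replaces those bounds; as you note, quoting Theorem \ref{ii1} for $X'$ would only cover $d-1\ge 6$. The one step you assert rather than prove is the properness of the incidence locus: that not \emph{every} pair $(p',p'')$ satisfies $T_{p'}X'\cap T_{p''}X'\ne\emptyset$. This does not follow from the determinantal count alone (that only gives codimension $\le 1$); it needs the classical fact, valid in characteristic $0$, that if all tangent lines of an irreducible space curve pairwise met, they would either all lie in a plane (forcing the curve to be planar, against non-degeneracy) or all pass through a common point (forcing the projection from that point to be everywhere ramified, impossible in characteristic $0$). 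With that fact supplied, your proof is complete and, like the paper's, covers all $d\ge 5$ uniformly.
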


\begin{proof}
Since $X'$ is a smooth non-degenerate curve, its first secant satisfies $\sigma _2(X') =\PP^3$ and $\textnormal{rk}_X(q)=2$ for all $q\in \PP^3\setminus \tau(X')$. Since $\tau (X')$ is an irreducible surface, it is sufficient to prove that $\textnormal{rk}_{X'}(q) \le 2$ for a
general $q\in \tau (X')$. Fix a general $p\in X'$. In particular, we assume that the order of contact of $T_pX'$ with $X'$ at
$p$ is $2$: if $T_pX'$ meets $X'$ at another point, then all points of $T_pX'$ have $X'$-rank at most $2$. Thus we may assume
$\deg (T_pX\cap X)=2$ and supported at $p$. Since $X'$ is smooth at $p$ and of degree $d-1$, the linear projection $\pi_{T_pX'}: \PP^3\setminus T_pX'\to \PP^1$ away from the line $T_p X'$ extends to a degree $d-3$ morphism $\psi: X'\to \PP^1$. Since $d\ge 5$, a general fiber of $\psi$ has cardinality at least two. This is equivalent to say that a general element of $T_pX$ has $X'$-rank at most $2$. (An analogous algorithm can be found in \cite{BB}.)
\end{proof}

By Lemma \ref{o1}, we have $\textnormal{rk}_{X'}(b) \le 2$ for a general $b\in \Gamma_a$. Thus $\textnormal{rk}_{X'}(\pi _a(q)) \le 2$ for a general $q\in \Gamma$. 

Fix a general $q\in \Gamma\subset W_4$ and recall that $a'\in \PP^3$ is the image of $a$ upon taking the closure of the image $\pi_a(X\setminus \{a\})$. Take $S'\subset X'$ such that $\sharp S' \le 2$ and $\pi _a(q)\in \langle S'\rangle$. If $S'\subset
X'\setminus \{a'\}$, then we may take $S''\subset X\setminus \{a\}$ with
$\pi_a(S'')=S'$ and hence $q\in \langle S''\cup \{a\}\rangle$. Since $\sharp (S''\cup \{a\})\le 3$, we
derive $\textnormal{rk}_X(q)\le 3$, a contradiction. 

Now assume $a'\in S'$. We have $\sharp S' =2$, because otherwise $q\in T_aX$, contradicting the generality of
$q$. Set
$\{b'\}:= S'\setminus \{a'\}$. If $\sharp(\langle S'\rangle \cap X')\ge 3$, we may take $a''\in X'\setminus
\{a'\}$ such that $\pi_a(q)\in \langle \{a'',b'\}\rangle$ and derive $\textnormal{rk}_X(q)\le 3$. Thus we may assume $\langle
S'\rangle \cap X' =\{a',b'\}$ (set-theoretically).  

Let $\Ss (X',\pi _a(q))$ denote the set of rank-decompositions of $\pi_a(q)$ with respect to $X'$. Since any two different
lines through $a'$ meet only at $a'$, $S'$ is the only element of $\Ss (X,\pi _a(q))$ containing $a'$, because otherwise another one should have
contained $\pi_a(q)\neq a'$ as well. Thus, to conclude, it is enough to prove that the set $\Ss (X',\pi _a(q))$ contains
at least another decomposition. As mentioned above, since $\Gamma$ is not a cone with vertex $a$ (for a general $a\in X$), $\Gamma_a$ is a surface. So it is sufficient to prove that $|\Ss (X',\pi_a(q))| >1$ is satisfied by all points $\pi _a(q)\notin \Sigma$ (i.e., possibly outiside some curve $\Sigma$) having at least a decomposition in $\Ss (X',\pi_a(q))$ containing $a'$. It is sufficient to apply the next lemma.

\begin{lemma}\label{x1}
Assume $d\ge 5$. Let $C_{a'}(X') \subset \PP^3$ be the cone with vertex $a'$ and $X'$ as its base. There exists a curve $\Sigma \subset C_{a'}(X')$ such that for all points $x\in C_{a'}(X')\setminus \Sigma $, with $\textnormal{rk}_{X'}(x) =2$, either $\Ss (X',x)$ is infinite or $|\Ss (X',x))|>1$.
\end{lemma}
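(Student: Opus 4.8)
The plan is to produce, for a general point $x$ of the cone $C_{a'}(X')$, a \emph{second} rank-two decomposition (distinct from the ruling one it already carries), and to sweep all the bad points into the exceptional curve $\Sigma$. Fix a general ruling line $R=\langle\{a',c\}\rangle$ of the cone, with $c\in X'$. For general $c$ the line $R$ is not a multisecant, so $R\cap X'=\{a',c\}$ set-theoretically, and hence $\{a',c\}\in\Ss(X',x)$ for every $x\in R\setminus X'$. Now consider the pencil $\{\Pi_t\}_{t\in\PP^1}$ of planes containing $R$. A general $\Pi_t$ meets $X'$ in $\deg X'=d-1$ points, two of which are $a'$ and $c$; since $d\ge 5$, the remaining $d-3\ge 2$ points contain a pair $p_1(t),p_2(t)\in X'\setminus\{a',c\}$. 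The chord $\langle\{p_1(t),p_2(t)\}\rangle$ lies in $\Pi_t$, hence meets the coplanar line $R$ in a point $x(t)\in R$, and $\{p_1(t),p_2(t)\}$ is a rank-two decomposition of $x(t)$ genuinely different from $\{a',c\}$, since $a'\notin\{p_1(t),p_2(t)\}$.

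Next I would study how $x(t)$ moves as $t$ varies in the pencil. If $x(t)$ is non-constant on $R$ for general $R$, then the map $\PP^1\to R\cong\PP^1$, $t\mapsto x(t)$, is surjective off finitely many points, so $R$ carries a cofinite set of points with at least two decompositions. Letting $R$ range over the one-parameter family of ruling lines, the locus of points of $C_{a'}(X')$ having $\ge 2$ decompositions (or infinitely many) is then constructible and dominates the cone with cofinite fibres, hence is dense in the irreducible surface $C_{a'}(X')$. Consequently the rank-two points carrying a unique decomposition form a proper closed subset, i.e. are contained in a curve $\Sigma$ (to which I also add the finitely many multisecant ruling lines, the vertex $a'$, and $X'$ itself); this is exactly the assertion of the lemma.

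The remaining possibility is that $x(t)\equiv x_0(R)$ is constant for general $R$. In that case infinitely many chords of $X'$ pass through $x_0(R)$, so $\Ss(X',x_0(R))$ is infinite and $x_0(R)$ lies in the locus $B\subset\PP^3$ of points through which $X'$ has infinitely many chords. The hard part will be to exclude this degenerate alternative, which amounts to showing that $B$ is \emph{finite}. I would argue that a point $x_0\in B\setminus X'$ forces the linear projection $\pi_{x_0}|_{X'}$ to be non-birational onto its image, so that the restricted hyperplane linear system on $X'\cong\PP^1$ is composed with a nontrivial covering $\PP^1\to\PP^1$ of degree $\ge 2$; this is a closed condition on $x_0$ which, by the generality of $X$ and of $a$ (equivalently of $X'\in|\Oo_G(1,d-2)|$ on the smooth quadric $G$), is satisfied by at most finitely many centres, so $B$ is finite.

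Granting the finiteness of $B$, the constant case is impossible: if $x(t)\equiv x_0(R)$ for the one-parameter family of general ruling lines $R$, then, $B$ being finite, the assignment $R\mapsto x_0(R)\in B$ would be constant on a one-parameter subfamily, forcing a single point $x_0$ to lie on infinitely many ruling lines, hence $x_0$ equal to their common point $a'$; this contradicts $x_0=\langle\{p_1,p_2\}\rangle\cap R\ne a'$, since the chord $\langle\{p_1,p_2\}\rangle$ does not pass through $a'$. Therefore the non-constant (generic) case holds, and the lemma follows with $\Sigma$ the union of the exceptional loci described above. I expect the only genuinely delicate point to be the finiteness of $B$, i.e. ruling out that $X'$ admits a one-dimensional family of non-birational projection centres; everything else is a pencil-and-dimension bookkeeping argument.
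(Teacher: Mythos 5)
Your strategy is genuinely different from the paper's. The paper fixes a point $x$ of the cone with $\Ss(X',x)$ finite and $x\notin\tau(X')$, projects from $x$, and exploits the fact that the image is a plane curve of degree $d-1$ and geometric genus $0$: if $x$ had a \emph{unique} decomposition, the image would have a unique singular point, necessarily a tacnode absorbing the whole arithmetic genus $(d-2)(d-3)/2$; this forces tangent lines of contact order at least three, which must lie on the quadric $G$, and so confines such $x$ to a curve. You instead manufacture a second decomposition at a general point of a general ruling $R$ of the cone, by intersecting $R$ with chords of the residual points of the pencil of planes through $R$, and you reduce the lemma to a dichotomy whose only dangerous branch is governed by the set $B$ of points lying on infinitely many chords of $X'$ (equivalently, centers of non-birational projection). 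Two repairs are needed but are harmless: since $x(t)$ is multivalued, the constant/non-constant dichotomy must be run on each irreducible component of the incidence correspondence over the pencil; and your assertion that $\langle \{p_1(t),p_2(t)\}\rangle$ avoids $a'$ needs a reason, e.g.\ that every trisecant line of $X'$ meets the quadric $G$ in at least three points, hence lies on $G$ and is one of the ruling lines meeting $X'$ in $d-2$ points, so only finitely many pairs $(p_1,p_2)$ can be collinear with $a'$ (or with $c$).

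The genuine gap is exactly where you flagged it: the finiteness of $B$. The justification you sketch --- that non-birationality of $\pi_{x_0}|_{X'}$ is a closed condition on $x_0$, hence by generality of $X$ and $a$ it holds at finitely many centers --- is not a proof: a closed condition can perfectly well hold along a curve or a surface of centers, and ruling this out is precisely what must be shown; nor can one appeal to generality of $X'\in|\Oo_G(1,d-2)|$ without proving that the curves with $\dim B\ge 1$ form a proper closed subset of that family, which is the same problem over again. Fortunately the gap closes using the very ingredient the paper's own proof leans on, namely that $X'$ lies on the smooth quadric $G$ with known bidegree. If $x_0\notin X'$ lies on infinitely many chords, then $\pi_{x_0}|_{X'}$ has degree $m\ge 2$ onto a plane curve of degree $e=(d-1)/m\le (d-1)/2$, and the chords through $x_0$ sweep out the cone $K$ over this image, an irreducible surface of degree $e$ containing $X'$. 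Since $K\ne G$, the divisor $K\cap G$ on $G\cong \PP^1\times\PP^1$ has type $(e,e)$ and contains the divisor $X'$ of type $(1,d-2)$ as a component, forcing $e\ge d-2$; together with $e\le (d-1)/2$ this yields $d\le 3$, a contradiction. (The possibility $x_0\in X'\cap R=\{a',c\}$ is already excluded by the trisecant remark above.) So in fact $B=\emptyset$ for $d\ge 5$, with no generality assumption at all; the constant branch of your dichotomy never occurs, and with these insertions your argument becomes a complete, and arguably cleaner, alternative to the proof in the paper.
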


\begin{proof}
The surface $C_{a'}(X')$ is irreducible and $C_{a'}(X')\ne \tau (X')$. Therefore they intersect along a curve, $\dim \left(C_{a'}(X')\cap \tau (X')\right) =1$. 

Consider all $x\in C_{a'}(X')\setminus \left(\tau (X')\cap C_{a'}(X')\right)$, with $\textnormal{rk}_{X'}(x) =2$, and suppose $\Ss (X',x)$ is not infinite. 

Let $\pi_x: \PP^3\setminus \{x\}\to \PP^2$ denote the linear projection away from $x$. Since $x\notin \tau
(X')$, then $x\notin X'$ and $\pi _{x|X'}$ is a local embedding. We are assuming $\Ss (X',x)$ is finite, i.e., $\pi_{x|X'}$ is birational onto its image. Note that if there are at least three different points of $X'$ with the same image by $\pi _x$, then $|(\Ss (X',x))| \ge 3$. So this case is excluded. Likewise, if there are $t$ singular points
of $\pi _x(X')$, we have $|(\Ss (X',x))| \ge t$, because $\pi _{x|X'}$ is a local embedding and so distinguishes tangent directions. 

Thus we may assume that $\pi _x(X')$ has a unique singular point, $\alpha$, which has exactly two branches, each of them smooth (the case with more branches was excluded above). Since $\deg (\pi_x(X'))=d-1$, one has $p_a(\pi _x(X')) = (d-2)(d-3)/2$. Thus $\alpha$ is a tacnode with arithmetic genus $(d-2)(d-3)/2$, since the normalization of $X$ is rational. 
Hence there exist distinct $a_1,a_2\in X'$, such that the plane $\langle T_{a_1} X' \cup T_{a_2} X'\rangle $ contains $x$ and the order of contact of $T_{a_i} X'$ with $X'$ at $a_i$ is at least three.

However, since $X'$ is contained in a smooth quadric surface $G$, each tangent line of $X'$ with order of contact at least three is contained in $G$. One of the rulings of $G$ is formed by lines with degree of intersection $d-2 \ge 3$ with $X'$. Therefore we may take as $\Sigma$ a finite union of lines of $G$ and the curve $C_{a'}(X')\cap \tau (X')$.
\end{proof}

\begin{small}

\end{small}

\end{document}